\begin{document}

\newtheorem{theorem}{Theorem}[section]
\newtheorem{proposition}[theorem]{Proposition}
\newtheorem{lemma}[theorem]{Lemma}
\newtheorem{corollary}[theorem]{Corollary}
\newtheorem{conjecture}[theorem]{Conjecture}
\newtheorem{question}[theorem]{Question}
\newtheorem{claim}[theorem]{Claim}
\newtheorem{exercise}[theorem]{Exercise}

\theoremstyle{definition}
\newtheorem{definition}[theorem]{Definition}
\newtheorem{note}[theorem]{Notation}

\theoremstyle{remark}
\newtheorem{remark}[theorem]{Remark}
\newtheorem{example}[theorem]{Example}
\newtheorem{problem}[theorem]{Problem}

\def\square{\hfill${\vcenter{\vbox{\hrule height.4pt \hbox{\vrule width.4pt
height7pt \kern7pt \vrule width.4pt} \hrule height.4pt}}}$}

\def\R{\mathbb R}
\def\C{\mathbb C}
\def\Z{\mathbb Z}
\def\H{\mathbb H}
\def\1{\pmb{1}}
\def\length{\textnormal{length}}
\def\cone{\textnormal{cone}}
\def\level{\textnormal{level}}
\def\parent{\textnormal{parent}}
\def\comp{\textnormal{comp}}
\def\wake{\textnormal{wake}}
\def\Aut{\textnormal{Aut}}

\newenvironment{pf}{{\it Proof:}\quad}{\square \vskip 12pt}
\newenvironment{spf}{{\it Sketch of proof:}\quad}{\square \vskip 12pt}

\newcommand{\marginal}[1]{\marginpar{\tiny #1}}

\title{Counting subgraphs in hyperbolic graphs with symmetry}

\author{Danny Calegari, Koji Fujiwara 
}
\thanks{}
\address{Danny Calegari.
Department of Mathematics,
University of Chicago,
Chicago, IL 60637, USA.
{\tt dannyc@math.uchicago.edu}
}
\address{Koji Fujiwara.
Department of Mathematics,
Kyoto University,
Kyoto, 606-8502, Japan.
{\tt kfujiwara@math.kyoto-u.ac.jp}
}

\maketitle

\section{Introduction}
This note addresses some questions that arise in the series of works
by Kyoji Saito on the growth functions of graphs \cite{Sa}, \cite{Sa2}.
We study ``hyperbolike'' graphs, which include Cayley graphs of
hyperbolic groups. We generalize some well-known results on hyperbolic
groups to the hyperbolike setting (Theorem \ref{theorem:rationality},
Theorem \ref{theorem:exponential}), including rationality of generating
functions, and sharp estimates on the growth rate of vertices.
We then apply these results to confirm a conjecture of Saito on the 
``opposite series'', which was originally posed for hyperbolic groups
(Theorem \ref{main}, Corollary \ref{hyp}).
We also give a (standard) example of a hyperbolike graph with positive density of
dead ends, and point out its implications for the applicability of the 
main theorems in \cite{Sa}.

\subsection{Acknowledgements}

We would like to thank Laurent Bartholdi, Pierre-Emmanuel Caprace, 
Markus Pfeiffer, Kyoji Saito and Yasushi Yamashita
for helpful comments and discussions. Danny Calegari was partly supported by NSF grant DMS 1358592.
Koji Fujiwara was partly supported by a Grant-in-Aid for Scientific
Research No. 23244005.

\section{Generating functions of hyperbolike graphs}\label{section:definition_statement}

\begin{definition}[hyperbolike graph]
A connected graph $X$ of finite valence is {\em $\delta$-hyperbolike} 
for some $\delta\ge 0$ if it satisfies the following properties:
\begin{enumerate}
\item{$X$ is $\delta$-hyperbolic; and}
\item{$\Aut(X)$ is transitive on the vertices.}
\end{enumerate}
\end{definition}

Condition (2) implies that all vertices have the same valence --- i.e.\/ $X$ is
regular. Moreover, by hypothesis, this (common) valence is finite. Thus $X$ is
proper as a (path) metric space.

\begin{example}[Hyperbolic group]
The main example of a hyperbolike graph is the Cayley graph of a hyperbolic group
with respect to a finite generating set. Different choices of generating sets
give rise to graphs which are $\delta$-hyperbolike for different $\delta$. Moreover,
the automorphism group of the graph depends on the choice of generating set. We
always have $G \subset \Aut(X)$ where $G$ acts (freely and transitively on the
vertices) on its Cayley graph by left multiplication.
\end{example}

\begin{example}[Free group]
Even for $X$ the Cayley graph of a hyperbolic group $G$, The group $\Aut(X)$ may be
much bigger than $G$. For example, we can take $G$ to be a free group, with
a free generating set. Then $X$ is a regular tree, and $\Aut(X)$ is uncountable.
\end{example}

\begin{example}[Quasi tree with parabolic symmetry group]\label{example:quasitree}
The following example was described to us by Pierre-Emmanuel Caprace. Let $T$ be a
$k$-regular tree (with $k\ge 3$ finite) and fix an end $e$ of $T$. For each vertex
$v$, let $\gamma_v$ denote the geodesic from $v$ to $e$, and let $v'$ denote the
vertex on $\gamma_v$ at distance $2$ from $v$. We obtain $X$ from $T$ by attaching
an edge from each vertex $v$ to the corresponding $v'$. Then $\Aut(X)$ is just the
subgroup of $\Aut(T)$ fixing $e$; in particular, it is vertex transitive, but not
unimodular, therefore there is no discrete subgroup acting cocompactly on $X$.
\end{example}

Pick a base point $x\in X$. Since $\Aut(X)$ is transitive, any two
choices are isomorphic. For any $n$ let $X_n$ denote the ball of
radius $n$ about $x$ in $X$ (i.e.\/ the complete subgraph spanned by
the vertices at distance $\le n$ from $x$).

We let $\Aut(X,x)$ denote the subgroup of $\Aut(X)$ fixing $x$. Evidently, $\Aut(X,x)$
fixes each subgraph $X_n$, so that there are homomorphisms 
$$p_n:\Aut(X,x) \to \Aut(X_n)$$
and we can identify $\Aut(X,x)$ with the inverse limit 
$$\Aut(X,x) = \varprojlim p_n(\Aut(X,x))$$
In particular, $\Aut(X,x)$ is compact, and therefore either finite or uncountable.
In the first case, $\Aut(X)$ is itself finitely generated and a hyperbolic group, and
the orbit map to $X$ is a quasi-isometry. But in general we do not know the answer to
the following:

\begin{question}\label{question:group}
Let $X$ be hyperbolike. Is there a hyperbolic group $G$ quasi-isometric to $X$?
\end{question}

\begin{remark}
If one removes the hypothesis that $X$ be $\delta$-hyperbolic, the analogue of Question~\ref{question:group}
has a {\em negative} answer in general. 

If $T_r$ and $T_s$ are regular trees of valence $r$, $s$ respectively (where $r\ne s$ and
$\infty > r,s > 2$), and if $h_r$, $h_s$ are horofunctions on $T_r$ and $T_s$ respectively, 
the {\em Diestel Leader graph} $DL(r,s)$ is the subgraph of the product $T_r\times T_s$
where $h_r+h_s=0$. These graphs were introduced in \cite{Diestel_Leader}.
Firstly, it was shown in
\cite{Bartholdi_Neuhauser_Woess} that
they do not admit a group action with finitely many orbits and finite vertex stabilizers, and
then it was shown in \cite{Eskin_Fisher_Whyte} that $DL(r,s)$ is not even quasi-isometric
to a Cayley graph.

The Diestel Leader graphs are reminiscent of non-unimodular solvable groups, and it is harder to
imagine an analogue in the hyperbolic world.
\end{remark}

\begin{remark}
Random walks on hyperbolike graphs (and more general graphs with vertex transitive symmetry
groups, which might not be hyperbolic or finite valence) are studied in
\cite{Kaimanovich_Woess}.
\end{remark}

\begin{definition}
Let $Y,Z$ be any two {\em finite} graphs. Let $(Y| Z)$ denote the
number of distinct embeddings of $Y$ as a complete subgraph of $Z$.
For any finite graph $Y$, define the generating function
$$b_Y(t):= \sum (Y|X_n) t^n$$
In words: the coefficients of $b_Y(t)$ count the number of copies of $Y$
in the balls of each fixed radius in $X$.
\end{definition}

By abuse of notation, we can think of $x$ as a graph with 1 vertex, so 
that $b_x(t)$ is the generating function for the sizes of the 
balls $|X_n|$. 

With this notation, we have the formula
$$b_{X_n}(t)/|\Aut(X_n)| = b_x(t)/t^n - \text{polar part at 0}$$ 
To see this, observe first that every embedding of
$X_n$ into $X$ as a complete subgraph (taking $x$ to $x$ without loss of
generality) has image equal to exactly $X_n$. For, every point in $X_n$
is within distance $n$ of $x$, so the image is contained in $X_n$. So
the claim follows by counting.

Then the formula follows, since embeddings of $X_n$ in $X_m$ up to
automorphisms are in bijection with points in $X_{m-n}$.

The following theorem generalizes a result in \cite{Ep2}:

\begin{theorem}[Rationality]\label{theorem:rationality}
Let $X$ be $\delta$-hyperbolike.
For any connected graph $Y$, let $b_Y(t)$ be the generating function
whose coefficient of $t^n$ is the number of distinct embeddings of $Y$ as
a complete subgraph of $X_n$. Then $b_Y(t)$ is rational.
\end{theorem}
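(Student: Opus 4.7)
The plan is to count embeddings of $Y$ by the outermost level their image reaches, organize the counts via a finite automaton of cone types, and conclude rationality from the transfer-matrix formalism. For an embedding $\iota\colon Y\hookrightarrow X$, set $\level(\iota) := \max_{y\in V(Y)} d(x,\iota(y))$. Since $\iota$ factors through $X_n$ iff $\level(\iota)\le n$, if $c_k$ denotes the number of embeddings with $\level(\iota)=k$ then
\[
b_Y(t) = \sum_n\Bigl(\sum_{k\le n}c_k\Bigr)t^n = \frac{1}{1-t}\sum_k c_k t^k,
\]
and it suffices to show $C(t):=\sum_k c_k t^k$ is rational.

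Fix a total order on $V(Y)$, and for each embedding $\iota$ let $y_*(\iota)\in V(Y)$ be the minimum (in this order) among the vertices realizing $\level(\iota)$. Set $D:=\textnormal{diam}(Y)$ and choose $R$ much larger than $D+\delta$. For $v\in V(X)$ with $d(x,v)=n$, let the $R$-\emph{type} $\tau(v)$ be the isomorphism class of the labelled graph $(B_R(v),\,w\mapsto d(x,w)-n)$. The quantity
\[
e(v) := \#\bigl\{\iota\colon Y\hookrightarrow X\ \bigm|\ \iota(y_*(\iota))=v\bigr\}
\]
depends only on $\tau(v)$: the image of any such $\iota$ lies in $B_D(v)\subset B_R(v)$, and the condition selecting $y_*(\iota)$ is read off from the labels there. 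Writing $e(\tau)$ for this common value and $f_n(\tau):=\#\{v:d(x,v)=n,\ \tau(v)=\tau\}$, we obtain $c_n=\sum_\tau e(\tau) f_n(\tau)$.

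Vertex-transitivity of $\Aut(X)$ makes all unlabelled balls $B_R(v)$ isomorphic, and the label takes only finitely many integer values on a finite graph of uniformly bounded size, so the set $T$ of $R$-types is finite. I would then build a directed multigraph on $T$ whose edges encode neighbour transitions $v\to v'$ with $d(x,v')=d(x,v)+1$, so that the vector $(f_n(\tau))_{\tau\in T}$ satisfies a linear recurrence. The generating functions $\sum_n f_n(\tau)t^n$ are then rational, hence so are $C(t)$ and $b_Y(t)$.

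The heart of the argument, and the only delicate point, is verifying that the transitions in this automaton are genuinely a function of $\tau$ alone — that $\tau(v')$ is determined by $\tau(v)$ together with the combinatorial position of $v'$ as a neighbour of $v$. This is Cannon's finiteness of cone types: $\delta$-hyperbolicity forces any geodesic from $x$ into $B_R(v')$ to fellow-travel (to within $O(\delta)$) one passing through a nearby region of $B_R(v)$, so the labelled picture near $v'$ is forced by the labelled picture near $v$ once $R$ is taken large compared with $\delta$ and $D$. Cannon's argument, reproduced in \cite{Ep2}, uses only $\delta$-hyperbolicity and local finiteness — both built into the hyperbolike hypothesis — and so carries over with no essential change. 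Once this is in hand, the rest is linear algebra.
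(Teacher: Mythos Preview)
Your overall strategy---anchor each embedding at an outermost vertex, reduce to counting vertices by $R$-type, and run a transfer matrix---is sound and close in spirit to the paper. But there are two gaps at the crucial step, and the first is exactly the subtlety the paper goes out of its way to address.

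\textbf{The main gap.} You assert that $\tau(v')$ is determined by $\tau(v)$ and the position of $v'$, and that Cannon's argument ``carries over with no essential change.'' This is the heart of the matter, and it does not carry over so cleanly. Concretely: for $w\in B_R(v')\setminus B_R(v)$ one has $d(v,w)=R+1$, and a geodesic from $x$ to $w$ is only guaranteed to come within $(x|w)_v + O(\delta)$ of $v$; since $(x|w)_v$ can be as large as $R$, the nearest point on that geodesic may sit at distance $R+O(\delta)$ from $v$, outside $B_R(v)$. So $d(x,w)$ is not visibly computable from the labels on $B_R(v)$, and the exact level function need not propagate with the same radius. The paper's fix is to replace the exact level by a \emph{tournament}: a function $F$ on $B_{2\delta+1}(y)$ recording only an upper bound for $d(x,z)-d(x,y)$, required to be exact just on \emph{competitors} (points reached by geodesics that synchronously fellow-travel those to $y$). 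Tournaments propagate to children by an explicit min-formula (Lemma~\ref{lemma:tournament}), and tournament type still determines cone type (Lemma~\ref{lemma:Cannon}). Your proof needs either this machinery or an honest argument that exact $R$-level propagates; citing Cannon is not enough, since Cannon's route in the group case is ``level determines \emph{cone} type'' followed by a ShortLex normal form---a symmetry-breaking step the paper explicitly explains is unavailable for hyperbolike graphs.

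\textbf{The secondary gap.} Even granting a well-defined automaton on types, your sentence ``so that the vector $(f_n(\tau))_\tau$ satisfies a linear recurrence'' hides an overcounting problem: a vertex $v'$ at level $n+1$ can have several parents at level $n$, so the unweighted child-transition matrix enumerates geodesics from $x$, not vertices. The paper handles this by weighting each automaton edge by the reciprocal of the number of parents of the child, and checks that this number is readable from the state (every parent is a competitor of every other). Your $R$-type does record the number of parents (the neighbours with label $-1$), so the same weighting would work for you---but you must actually do it.

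Your reduction $b_Y(t)=(1-t)^{-1}\sum_k c_k t^k$ via the outermost vertex is clean and arguably tidier than the paper's detour through $b_{X_n}$. But the automaton itself needs the tournament idea (or an equivalent) and the parent-weighting; those are the substantive content of the paper's proof, not background one can wave at.
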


The result is known for a Cayley graph of a hyperbolic group, \cite{Ep2}.
\section{Proof of the Rationality Theorem}

In this section we give the proof of Theorem~\ref{theorem:rationality}.
The argument borrows heavily from the well-known proof by Cannon \cite{Ca} in the
case of a hyperbolic group; but there are some subtleties, which are
worth spelling out now in informal language.

The main subtlety is the possibility that there are distinct geodesics
$\gamma$, $\gamma'$ between points $x$ and $y$, and some $\phi \in \Aut(X)$
with $\phi(\gamma)=\gamma'$. This situation can certainly occur: consider
a surface group with a presentation like $\langle a,b,c,d\; | \; [a,b][c,d]\rangle$.
The Cayley graph is the 1-skeleton of the tiling of $\H^2$ by regular octagons with
angles $\pi/4$ at the vertices. Two antipodal vertices of an octagon may be joined
by two distinct paths of length 4 in the Cayley graph, and these paths may
be interchanged by an automorphism of the graph.

This ambiguity makes it tricky to define a regular language of geodesics in bijection
with the elements of $X$. Simply put, there is no way to make such a choice
without breaking the symmetry --- in other words, without finding a subgroup $G$
of $\Aut(X)$ which is still (coarsely) transitive, but acts freely on some rich set
of (sufficiently long) geodesics. Such a subgroup does not exist in general
(e.g.\/ Example~\ref{example:quasitree}), and it is not clear what to use as a substitute;
morally this is the sort of issue we are raising with Question~\ref{question:group}.

\begin{definition}[Synchronous fellow travelers]
Let $\gamma$ and $\gamma'$ be two geodesics with the same initial vertex. Let
their lengths be $\ell$ and $\ell'$ respectively.
For any $T\ge 0$, the geodesics $\gamma$, $\gamma'$ are said to 
{\em $T$-synchronously fellow travel} if
for all $i$ up to $\min(\ell,\ell')$, there is
an inequality
$$d(\gamma(\ell-i),\gamma'(\ell'-i))\le T$$
\end{definition}

\begin{definition}[Competitor]
Let $B_{2\delta+1}(y)$ be the ball of radius $(2\delta+1)$ about $y$ in $X$. 
For any $y$, an element $z \in B_{2\delta+1}(y)$ is a {\em competitor} of $y$
if $d(x,z) \le d(x,y)$, and if {\em some} geodesic from $x$ to $z$ $(2\delta+1)$-synchronously
fellow travels {\em every} geodesic from $x$ to $y$.
\end{definition}

Note that with this definition, $y$ is a competitor of itself, since every geodesic
from $x$ to $y$ $(2\delta+1)$-synchronously fellow travels every (other)
geodesic from $x$ to $y$.

\begin{definition}[Tournament and tournament type]

A function $F$ from
the vertices of $B_{2\delta+1}(y)$ to $\Z$ is a {\em tournament} if it
satisfies the following conditions:
\begin{enumerate}
\item{for any $z$ there is an inequality $d(x,z) - d(x,y) \le F(z) \le d(y,z)$; and}
\item{if $z$ is a competitor of $y$, then $d(x,z) = d(x,y)+F(z)$.}
\end{enumerate}

Two tournaments $F:B_{2\delta+1}(y) \to \Z$ and $F':B_{2\delta +1}(y') \to \Z$
have the same {\em type} if there is an automorphism $\phi \in \Aut(X)$ with
$\phi(y)= \phi(y')$ so that $F = F' \circ \phi$.
\end{definition}

Note that if $F$ is a tournament then $|F(z)|\le 2\delta+1$ for all $z$, so 
there are only finitely many types of tournament.

\begin{remark}\label{remark:tournament}
The meaning of a tournament is roughly as follows.
As we march along a path, we would like to know the relative distance from $x$
to the different elements $z$ in $B_{2\delta+1}(y)$ in order to certify that
we are really traveling along a geodesic. The problem is that it is hard to keep
track of relative distance to points $z$ that are on the periphery. So we keep track
of an {\em upper bound} on their relative distance (i.e.\/ the value $F(z)$),
which measures (roughly) the length of the shortest path from $x$ to $z$ which
stays (synchronously) close to the geodesic we have traveled along.
\end{remark}

\begin{definition}[cone and cone type]
The {\em cone} associated to a point $y$,
denoted $\cone(y)$, is the full
subgraph of $X$ consisting of points $z$ so that $d(x,z)=d(x,y)+d(y,z)$.

We say that $y$ and $y'$ have the same {\em cone-type} if there is
$\phi \in \Aut(X)$ taking $y$ to $y'$ and taking $\cone(y)$ to $\cone(y')$.
\end{definition}

The following lemma is the analogue of Cannon's key lemma, that $(2\delta+1)$-level
determines cone type in hyperbolic groups.

\begin{lemma}[Tournament determines cone type]\label{lemma:Cannon}
Let $X$ be $\delta$-hyperbolike with base point $x$. Let $y$ and $y'$
with tournaments $F$ and $F'$ be given.

Suppose there
is $\phi \in \Aut(X)$ with $F=F'\circ \phi$. Then
$\phi$ takes $\cone(y)$ to $\cone(y')$.
\end{lemma}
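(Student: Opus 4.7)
The approach is induction on $n = d(y, z)$, proving that $\phi(z) \in \cone(y')$ for every $z \in \cone(y)$; the reverse inclusion follows by applying the same argument with $(\phi, F, F')$ replaced by $(\phi^{-1}, F', F)$, giving $\phi(\cone(y)) = \cone(y')$. For the base case $n = 1$: if $z \in \cone(y)$ is adjacent to $y$, then $z \in B_{2\delta+1}(y)$ and both inequalities of tournament condition (1) collapse to $1$, forcing $F(z) = 1$, so $F'(\phi(z)) = 1$ as well. Were $\phi(z) \notin \cone(y')$, adjacency of $\phi(z)$ to $y'$ would give $d(x, \phi(z)) \le d(x, y')$; and the standard fact that in $\delta$-hyperbolic spaces geodesics from a common start to endpoints at distance $1$ must $(2\delta+1)$-synchronously fellow travel would make $\phi(z)$ a competitor of $y'$. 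Tournament condition (2) would then force $F'(\phi(z)) = d(x, \phi(z)) - d(x, y') \le 0$, contradicting $F'(\phi(z)) = 1$.

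For the inductive step $n \ge 2$, I would fix a geodesic $y = w_0, w_1, \ldots, w_n = z$ through $\cone(y)$. The base case gives $\phi(w_1) \in \cone(y')$. To invoke the inductive hypothesis on the pair $(w_1, z)$ with $d(w_1, z) = n - 1$, I need matching tournaments $F_1$ at $w_1$ and $F_1' := F_1 \circ \phi^{-1}$ at $\phi(w_1)$. I plan to construct $F_1$ canonically from the data $(y, F, w_1)$ using only combinatorial ingredients ($\delta$, the graph structure of the relevant balls, and the values of $F$), so that the recipe commutes with $\phi$ and automatically produces the analogous $F_1'$ at $\phi(w_1)$. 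For $v \in B_{2\delta+1}(y) \cap B_{2\delta+1}(w_1)$ the formula $F_1(v) = F(v) - 1$ already satisfies condition (1); for $v \in B_{2\delta+1}(w_1)$ with $d(y, v) = 2\delta + 2$, I use $\delta$-hyperbolicity to locate a proxy $u \in B_{2\delta+1}(y)$ near some geodesic $[x, v]$, and define $F_1(v)$ from $F(u)$ with an appropriate combinatorial shift. With $F_1, F_1'$ in hand, the induction yields $\phi(z) \in \cone(\phi(w_1))$, and since $\phi$ is an isometry the distances align (one computes $d(x, \phi(z)) = d(x, y') + 1 + (n-1) = d(x, y') + d(y', \phi(z))$) so that $\phi(z) \in \cone(y')$.

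The hard part will be verifying condition (2) for the constructed $F_1$. A competitor $v$ of $w_1$ satisfies only $d(x, v) \le d(x, y) + 1$, so $v$ need not be a competitor of $y$; moreover, the synchronous fellow-traveling condition at $w_1$ must be propagated back to $y$. Here I expect to use that geodesics from $x$ to $y$ and from $x$ to $w_1$ themselves $(2\delta+1)$-synchronously fellow travel, a consequence of $w_1 \in \cone(y)$ and $\delta$-hyperbolicity, so that fellow-traveling of a $v$-geodesic with $w_1$-geodesics transfers to fellow-traveling with $y$-geodesics up to an additive constant absorbable into the $2\delta + 1$. Once this is done, the exact value of $F_1(v)$ matches $d(x, v) - d(x, w_1)$ on competitors, the tournament recursion commutes with $\phi$, and the induction closes.
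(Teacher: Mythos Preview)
Your approach differs substantially from the paper's and is considerably more laborious. The paper also inducts on $d(y,z)$, but takes the \emph{penultimate} point $w$ on a geodesic from $y$ to $z$ (so $d(w,z)=1$) rather than the first point $w_1$. By induction $\phi(w)\in\cone(y')$. If $\phi(z)\notin\cone(y')$, a true geodesic from $x$ to $\phi(z)$ is strictly shorter than the path through $y'$ and $\phi(w)$; by $\delta$-hyperbolicity it passes through $B_\delta(y')$, so some point $z'$ on it is a competitor of $y'$, and condition~(2) gives $F'(z')=d(x,z')-d(x,y')$. Since $F(\phi^{-1}(z'))=F'(z')$, condition~(1) for $F$ bounds $d(x,\phi^{-1}(z'))$ from above, and transporting the tail of that geodesic by $\phi^{-1}$ yields a path from $x$ to $z$ of length strictly less than $d(x,y)+d(y,z)$, contradicting $z\in\cone(y)$. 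No intermediate tournaments are ever constructed; the original $F,F'$ at $y,y'$ suffice for every $n$.

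Your route can in principle be made to work, but what you call ``the hard part'' --- constructing $F_1$ at $w_1$ and verifying it is a tournament --- is exactly the content of the paper's \emph{next} lemma (tournament type passes canonically to children, via $F_1(v)=\min\{F(u)+d(u,v)-1 : u\in B_{2\delta+1}(y)\}$). Your ad hoc recipe ($F_1(v)=F(v)-1$ on the overlap, a ``proxy'' $u$ otherwise) does not obviously satisfy condition~(2), and repairing it would lead you to rediscover that min-formula. So you are effectively trying to prove two lemmas at once; the penultimate-point contradiction decouples them and keeps the present proof to a single paragraph.
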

\begin{proof}
Let $z \in \cone(y)$. We need to show that $\phi(z) \in \cone(y')$.
This is proved by induction on $d(y,z)$. If $\gamma$ is a geodesic
from $y$ to $z$, and $w$ is the penultimate point on the geodesic,
then $\phi(w) \in \cone(y')$ by the induction hypothesis. 
So if $\phi(z)$ is not in $\cone(y')$ we must have 
$d(x,y')+d(y',\phi(z))\ge d(x,\phi(z))+1$. A geodesic from $x$ to
$\phi(z)$ must pass through $B_{\delta}(y')$ and therefore some
point on that geodesic must be a competitor to $y'$. Applying
$\phi^{-1}$ to the restriction of this geodesic gives a shortcut from
a corresponding competitor to $z$, contrary to the fact that $z$ is in
$\cone(y)$. So $\phi(z) \in \cone(y')$ as claimed.
\end{proof}

\begin{lemma}\label{lemma:tournament}
Let $y \in X$, and let $F:B_{2\delta+1}(y) \to \Z$ be a tournament.
Then for any $y' \in X$ with $d(y,y')=1$ and $d(x,y')=d(x,y)+1$ there is a tournament
$F':B_{2\delta+1}(y') \to \Z$ whose type depends only on the type of $F$
and the choice of $y'$ in the type of $\cone(y)$.
\end{lemma}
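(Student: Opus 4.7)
The plan is to define $F'$ by the explicit formula
$$F'(z') := -1 + \min_{z \in B_{2\delta+1}(y)} \bigl( F(z) + d(z,z') \bigr),$$
for each $z' \in B_{2\delta+1}(y')$. Because $B_{2\delta+1}(y') \subseteq B_{2\delta+2}(y)$ and the formula depends only on $F$ and the $X$-metric, any automorphism $\phi \in \Aut(X)$ taking $y$ to $y_*$, $y'$ to $y'_*$, and $F$ to $F_*$ will automatically take $F'$ to $F'_*$. Thus the type of $F'$ is determined by the type of the triple $(y,y',F)$, which is precisely the ``type of $F$ and the choice of $y'$ in $\cone(y)$'' (note $y' \in \cone(y)$ since $d(x,y') = d(x,y) + d(y,y')$). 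What remains is to verify that this formula does define a tournament.

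For the inequalities of condition (1), the upper bound is obtained by taking $z = y'$ in the minimum (legal since $d(y,y')=1\le 2\delta+1$); the tournament inequalities applied at $y'$ pin $F(y') = 1$, giving $F'(z') \le d(y',z')$. For the lower bound, for any $z \in B_{2\delta+1}(y)$ the inequality $F(z) \ge d(x,z) - d(x,y)$ combined with $d(x,z) + d(z,z') \ge d(x,z')$ yields $F(z)+d(z,z')-1 \ge d(x,z') - d(x,y')$, and this is preserved by taking the minimum.

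Condition (2) is the substantive part. Suppose $z'$ is a competitor of $y'$, witnessed by some geodesic $\alpha$ from $x$ to $z'$ which $(2\delta+1)$-synchronously fellow travels every geodesic from $x$ to $y'$. The key idea is that every geodesic $\eta$ from $x$ to $y$ extends, by concatenation with the edge $(y,y')$, to a geodesic from $x$ to $y'$, so $\alpha$ fellow travels each such extension. Setting $z^* := \alpha(d(x,z')-1)$, the synchronous fellow-traveling condition at offset $1$ from the terminal vertices forces $d(z^*, y) \le 2\delta+1$, and the initial segment of $\alpha$ of length $d(x,z')-1$ is a geodesic from $x$ to $z^*$ which $(2\delta+1)$-synchronously fellow travels every such $\eta$; since also $d(x,z^*) = d(x,z')-1 \le d(x,y)$, the vertex $z^*$ is a competitor of $y$. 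Applying condition (2) for $F$ gives $F(z^*) = d(x,z^*) - d(x,y) = d(x,z') - d(x,y')$, and substituting $z = z^*$ into the minimum yields $F'(z') \le d(x,z') - d(x,y')$. Combined with condition (1), this is an equality.

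I expect the main obstacle to be the penultimate-vertex argument in the verification of condition (2): the delicate point is that one must pass fellow-traveling from the family of \emph{all} geodesics from $x$ to $y'$ down to the family of \emph{all} geodesics from $x$ to $y$, and this relies on the fact that the edge $(y,y')$ genuinely extends every $\eta$ to such a geodesic. Once this is in hand, the remaining work is routine manipulation of triangle and tournament inequalities.
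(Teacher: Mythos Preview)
Your proposal is correct and follows essentially the same approach as the paper: you use the identical formula $F'(z') = -1 + \min_{z \in B_{2\delta+1}(y)}(F(z)+d(z,z'))$, and your verification of the tournament axioms parallels the paper's, including the key step of passing to the penultimate vertex $z^*$ on the witnessing geodesic $\alpha$ and recognizing it as a competitor of $y$. Your treatment is in fact slightly more explicit than the paper's in two places: you pin down $F(y')=1$ to get the upper bound in condition~(1), and you spell out why the truncated $\alpha$ fellow-travels \emph{every} geodesic from $x$ to $y$ (namely because each such geodesic extends across the edge $(y,y')$ to a geodesic to $y'$), a point the paper asserts without elaboration.
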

\begin{proof}
We construct $F'$ as follows. First, note that $d(x,y')=d(x,y)+1$ means that
there is a geodesic $\gamma$ from $x$ to $y'$ whose penultimate vertex is $y$.

Now, let $z' \in B_{2\delta+1}(y')$ be a competitor of $y'$. Thus, by definition,
there is some geodesic $\gamma'$ from $x$ to $z'$ which $(2\delta+1)$-synchronously
fellow travels $\gamma$.

Let $z$ be on $\gamma'$ with $d(z,z')=1$. Then by the definition of synchronous
fellow-traveling, $d(z,y)\le 2\delta+1$, and $z$ is a competitor of $y$.

So, we define
$$F'(z') = \min  \lbrace F(z)+d(z,z')-1 \; | \; z \in B_{2\delta+1}(y) \rbrace$$
Then $F'(z') \le d(y',z')$ since $F(z) \le d(y,z)$.
Evidently, $d(x,z')=d(x,y')+F'(z')$ for every competitor $z'$ of $y'$. Moreover, for
every $z'$ there is some $z\in B_{2\delta+1}(y)$ with
$$d(x,z') - d(x,y') \le d(x,z) + d(z,z') - d(x,y) - 1 \le F(z) + d(z,z') - 1 = F'(z')$$
(take $z$ to attain the minimum in the definition of $F'(z')$.
If $z'$ is a competitor of $y'$, then each $\le$ becomes $=$).
By definition, the type of $F'$ depends only on the type of $F$ and 
the choice of $y'$ in the type of cone($y$).
\end{proof}

\begin{definition}[child and parent]
A point $y'$ is a {\em child} of $y$, and $y$ is a {\em parent} of $y'$, if
$d(y,y')=1$ and $d(x,y') = d(x,y)+1$.
\end{definition}

Every parent of $y'$ is a competitor of every other parent. Therefore the tournament
type of any parent $y$ determines the number of parents of each child of $y$.

We now give the proof of Theorem~\ref{theorem:rationality}.
\begin{proof}
Define a finite directed graph as follows. Each vertex corresponds to a possible
tournament type. There is a directed edge from the tournament type of
$y,F$ to the tournament type of $y',F'$ if $y'$ is a child of $y$, and $F'$ is
the tournament type constructed from the tournament type of $F$ by 
Lemma~\ref{lemma:tournament}. 
There is a unique vertex, the base vertex, for the tournament type for the base vertex $x$.
We take the connected component of the base vertex in the following argument.

We put a (rational) weight $w$ on the edge from $(y,F)$ to $(y',F')$ which is
equal to the reciprocal of the number of parents of $y'$. We need to show that this
is well-defined; i.e.\/ it can be determined from the tournament type of 
$(y,F)$ and $(y',F')$.
In the ball $B_2(y)$, count the number of $z$ with $F(z)=0$ and $d(z,y')=1$.
We claim that these are in bijection with the parents of $y'$. For, if
$F(z)=0$ then $d(x,z) \le d(x,y)$ by definition, so if furthermore
$d(z,y')=1$ then $d(x,y') \le d(x,z) + 1 \le d(x,y) + 1 = d(x,y')$, so these
inequalities are equalities. Conversely, every parent $z$ is a competitor, and
thus $F(z)=0$ since $d(z,y')=1$. This proves the claim, and shows that the weight
is well-defined.

Label the vertices of the graph by distinct integers, and define a 
non-negative matrix $M$ whose $ij$ entry is equal to $w(e)$ if there is an edge
$e$ from vertex $i$ to vertex $j$, and $0$ otherwise. Let $\iota$ be the row vector
$(1,0,0\cdots 0)$ and let $\1$ be the column vector whose entries are all $1$s.
Then there is a formula
$$b_x(t) = \sum_n (\iota M^n \1) t^n$$
whose coefficients by their form satisfy a finite linear recurrence (due to
the fact that $M$ is a root of its own characteristic polynomial), and therefore
$b_x(t)$ is a rational function.

In \S~\ref{section:definition_statement} we saw that
$b_{X_n}(t)/|\Aut(X_n)|$ is derived from $b_x(t)/t^n$ by throwing away the
polar part at $0$. Thus $b_{X_n}(t)$ is also a rational function.

Finally, for any connected graph $Y$ we choose a base point $y \in Y$ and
an integer $n$ which is at least as big as the diameter of $Y$, and we count how
many copies of $Y$ there are in $X_n$ with $y$ at the center. For each of these
copies, we let $D$ be the least number so that $Y$ is in $X_D$, and call these the
{\em $D$-copies}. From these finitely many coefficients we can reconstruct
$b_Y(t)$ from $b_{X_n}(t)$ in an obvious way and express it as a finite linear
combination of series of the form $b_D(t)$ for $D\le n$.
\end{proof}

\begin{remark}[explanation of the formula]
Let $v_0$ be the base vertex of the directed graph $\Gamma$
in the argument. 
Lemma \ref{lemma:tournament} gives a natural map $B$ from the set
of all finite geodesics starting at  $x$ in $X$ to 
the set of all directed finite paths
starting at  $v_0$ in $\Gamma$. 
Indeed the map $B$ is a bijection. 
The inverse $B^{-1}$ is given by the induction on the length 
of a path. We call the inverse image a {\em lift}. Suppose $v(i_j)$, $j=0,1,2, \cdots, n+1$ 
is a directed path in $\Gamma$  with $v(i_0)=v_0$ and
it is lifted for $0 \le j \le n$ to a geodesic starting at $x$ and ending at $z$ in $X$.
Since there is a directed edge from $v(i_n)$ to $v(i_{n+1})$,
there must be a point $y \in X$ and a child of $y$, $y'$ such that 
the tournament type of $y$ is $v(i_n)$ and the tournament type
of $y'$ is $v(i_{n+1})$, and that there is $\phi\in \Aut(X)$ with 
$\phi(y)=z$. Now extend the geodesic by adding $\phi(y')$
after $z$, which is the lift of $v(i_{n+1})$.
This is a geodesic by Lemma \ref{lemma:Cannon}.

The map $P$ assigning the end points
to those geodesics is a surjection to $X$.
Notice that for each point $y \in X$ with $y\not=x$, by the 
definition of the weight of each edge in $\Gamma$, the total weight of the paths
in the set $B P^{-1}(y)$ is always $1$ (again, by the induction 
on $d(x,y)$). Now the formula follows. 
\end{remark}

\section{Patterson--Sullivan measures for hyperbolike graphs}

From Theorem~\ref{theorem:rationality} and from elementary linear algebra it
follows that if $X$ is $\delta$-hyperbolike for some $\delta$, and is not
quasi-isometric to a point or a line, then there are constants $\lambda>1$
and $C>1$, and an integer $k\ge 0$ so that there is an estimate of the form
$$C^{-1} \lambda^n n^k \le |X_n| \le C \lambda^n n^k$$
In this section we refine this estimate, showing that $k=0$. Explicitly, we show
\begin{theorem}[Exponential]\label{theorem:exponential}
Let $X$ be $\delta$-hyperbolike. Then there are constants $\lambda>1$ and $C>1$ so
that there is an estimate of the form
$$C^{-1} \lambda^n \le |X_n| \le C \lambda^n$$
\end{theorem}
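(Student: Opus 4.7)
The plan is to adapt the classical Patterson--Sullivan / Coornaert argument for hyperbolic spaces with a cocompact isometric group action to our more general setting, using the vertex transitivity of $\Aut(X)$ as a substitute for discrete cocompactness. Set $\lambda := \limsup_n |X_n|^{1/n}$. Combining Theorem~\ref{theorem:rationality} with Perron--Frobenius for the non-negative matrix $M$ built in its proof yields $\lambda>1$ (since $X$ is neither a point nor quasi-isometric to a line) and $|X_n|\le C\lambda^n n^k$ for some integer $k\ge 0$. The theorem amounts to the matching lower bound $|X_n|\ge c\lambda^n$.

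I would first construct a probability measure $\mu_x$ on the Gromov boundary $\partial X$ as follows. Consider the Poincar\'e series $P_x(s)=\sum_{y} e^{-s\,d(x,y)}$, with critical exponent $s_c=\log\lambda$. If $P_x$ diverges at $s_c$, set
$$\mu_{x,s} := P_x(s)^{-1}\sum_{y\in X^{(0)}} e^{-s\,d(x,y)}\,\delta_y,$$
and let $\mu_x$ be any weak-$*$ limit as $s\searrow s_c$, which is supported on $\partial X$ by divergence; otherwise apply Patterson's classical trick, multiplying summands by a slowly varying weight to force divergence. Averaging $\mu_x$ over the compact group $\Aut(X,x)$ with its Haar measure, we may assume $\mu_x$ is $\Aut(X,x)$-invariant. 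Vertex transitivity of $\Aut(X)$ then lets us transport $\mu_x$ to every vertex: pick $\phi_y\in\Aut(X)$ with $\phi_y(x)=y$ and set $\mu_y := (\phi_y)_*\mu_x$, well-defined because of the $\Aut(X,x)$-invariance. The standard Coornaert estimates (using thin triangles) show that $\mu_y$ and $\mu_x$ are mutually absolutely continuous with Radon--Nikodym derivative bounded above and below by constant multiples of $\lambda^{-\beta_\xi(x,y)}$, where $\beta$ denotes a Busemann-type function.

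The heart of the argument is then a Shadow Lemma: there exist $R>2\delta+1$ and $C>0$ such that, for every vertex $y$ at distance $n$ from $x$,
$$C^{-1}\lambda^{-n}\;\le\;\mu_x(\Sigma_R(y))\;\le\;C\lambda^{-n},$$
where $\Sigma_R(y)\subset\partial X$ consists of $\xi$ such that some geodesic ray from $x$ to $\xi$ enters $B_R(y)$. Given this, hyperbolicity implies that the shadows $\Sigma_R(y)$ with $y$ on the sphere $S_n=X_n\setminus X_{n-1}$ cover $\partial X$ with multiplicity bounded independently of $n$, so
$$1=\mu_x(\partial X)\;\le\;\text{mult}\cdot\sum_{y\in S_n}\mu_x(\Sigma_R(y))\;\le\;\text{const}\cdot|S_n|\,\lambda^{-n},$$
giving $|S_n|\ge c\lambda^n$ and hence $|X_n|\ge c\lambda^n$. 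The main obstacle is the lower bound of the Shadow Lemma, which in the classical setting relies on a discrete cocompact action providing many group elements that ``move mass from infinity into $B_R(y)$''. Here the replacement is transitivity of $\Aut(X)$ together with compactness of $\Aut(X,x)$: once $\mu_x$ is made stabilizer-invariant, the transported measures $\mu_y$ form a coherent family on which the usual shadow estimates go through verbatim, using only $\delta$-hyperbolicity and the conformal-type transformation rule established in the previous step.
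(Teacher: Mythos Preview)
You have the direction of the argument reversed. As the paper states just before Theorem~\ref{theorem:exponential}, rationality together with elementary linear algebra already yields a \emph{two-sided} estimate $C^{-1}\lambda^n n^k \le |X_n| \le C\lambda^n n^k$ for some $k\ge 0$; in particular the lower bound $|X_n|\ge c\lambda^n$ is immediate (and this is also why the Poincar\'e series genuinely diverges at $h=\log\lambda$, so Patterson's trick is unnecessary). What must be shown is that $k=0$, i.e.\ the \emph{upper} bound $|X_n|\le C\lambda^n$. Your final inequality chain proves only $|S_n|\ge c\lambda^n$, which is the easy direction and leaves the possibility $k>0$ untouched.

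The fix is small, since you have assembled exactly the right ingredients and even correctly flagged the lower bound of the Shadow Lemma as the delicate point. Use that lower bound, not the upper one: from $\mu_x(\Sigma_R(y))\ge C^{-1}\lambda^{-n}$ for each $y$ with $d(x,y)=n$, together with the bounded multiplicity of the shadow cover, one gets
\[
|X_{=n}|\,C^{-1}\lambda^{-n}\;\le\;\sum_{d(x,y)=n}\mu_x(\Sigma_R(y))\;\le\;N\,\mu_x(\partial X)\;=\;N,
\]
hence $|X_{=n}|\le CN\lambda^n$, which is the missing upper bound. (Note also that in your displayed chain the multiplicity factor sits on the wrong side: bounded multiplicity bounds $\sum_y \mu_x(\Sigma_R(y))$ from \emph{above} by a constant times $\mu_x(\partial X)$, not the other way around.) With this correction your argument coincides with the paper's.
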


\begin{remark}
If we use the notation $X_{=n}$ for the subset of $X$ at distance {\em exactly} $n$
from the base point, then a similar estimate
$$C^{-1} \lambda^n \le |X_{=n}| \le C \lambda^n$$
holds, with the same constant $\lambda$ but a possibly different constant $C$.
\end{remark}

If $X$ is the Cayley graph of a hyperbolic group, Theorem~\ref{theorem:exponential}
is due to Coornaert \cite{Co}, and is proved by generalizing the theory of Patterson--Sullivan
measures. As explained in \cite{Calegari_ergodic} \S~2.5, the proof of Coornaert's theorem
can be considerably simplified by first showing that the generating function 
$b_x(t)$ is rational, as a corollary of Cannon's theorem for hyperbolic groups.

Our proof of Theorem~\ref{theorem:exponential} runs along very similar lines, and
amounts to little more than the verification that the steps in the argument
given in \cite{Calegari_ergodic} hold in the more general context of hyperbolike graphs.
We carry out this verification in the remainder of the section.

\subsection{Visual boundary}

The first step is to metrize $\partial_\infty X$ following Gromov. Let
$d_X$ denote the ordinary (path) metric in $X$.
\begin{definition}
Fix some base point $x\in X$ and some constant $a>1$. The
{\em $a$-length} of a rectifiable path $\gamma$ in $X$, denoted $\length_a(\gamma)$,
is the integral along $\gamma$ of $a^{-d_X(x,\cdot)}$ with respect to its ordinary 
length, and the {\em $a$-distance} from $y$ to $z$, denoted $d_X^a(y,z)$ is the
infimum of the $a$-lengths of paths between $y$ and $z$.
\end{definition}

The following comparison lemma, due to Gromov, lets us compare $a$-length to ordinary
length.

\begin{lemma}[Gromov]\label{lemma:comparison}
There is some $a_0>1$ so that for $1<a<a_0$ the completion $\overline{X}$ of $X$ in
the $a$-length metric is homeomorphic to $X \cup \partial_\infty X$. Moreover, for
such an $a$ there is a constant $C$ so that for all $y,z \in \partial_\infty X$ there
is an inequality
$$C^{-1}a^{-(y|z)} \le d_X^a(y,z) \le Ca^{-(y|z)}$$
where $(y|z)$ denotes the Gromov product.
\end{lemma}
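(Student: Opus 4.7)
The plan is to adapt Gromov's classical construction of the visual metric, as expounded in \cite{Calegari_ergodic} \S 2.5 for the hyperbolic group case, to the hyperbolike setting. The key observation is that the argument uses only properness and $\delta$-hyperbolicity of the ambient space; vertex-transitivity of $\Aut(X)$ plays no role in this particular lemma, and $X$ is a proper geodesic metric space because it is a connected graph of finite valence. Consequently, Gromov's original argument applies essentially verbatim.

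First I would verify finiteness of $a$-lengths of rays. Along any geodesic ray based at $x$ the $a$-length is the geometric sum $\sum_{n\ge 0} a^{-n} = a/(a-1)$, finite for every $a>1$, so $(X, d_X^a)$ has finite diameter. Next, I would identify the completion set-theoretically with $X \cup \partial_\infty X$. A $d_X^a$-Cauchy sequence $(y_n)$ either stays bounded in $d_X$ (and then converges in $X$ by local finiteness) or satisfies $d_X(x, y_n) \to \infty$. In the escaping case, geodesics from $x$ to $y_n$ subconverge uniformly on compacta (Arzela--Ascoli, enabled by properness) to a geodesic ray from $x$ whose endpoint in $\partial_\infty X$ is forced to be unique by the Cauchy condition. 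Compactness of $\overline{X}$ follows, together with the topological identification.

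For the upper bound in the Gromov product comparison, fix $y,z \in \partial_\infty X$ and choose geodesic rays $\gamma_y,\gamma_z$ from $x$ to $y$ and $z$. By the thin-triangles condition these rays $2\delta$-fellow-travel up to time approximately $(y|z)$ from $x$. I would construct a path from $y$ to $z$ by following the tails of $\gamma_y$ and $\gamma_z$ past the fellow-traveling region, with a bridging segment of at most $2\delta$ edges at distance $\approx (y|z)$ from $x$. Its $a$-length is at most $2 \sum_{n\ge (y|z)} a^{-n} + 2\delta \cdot a^{-(y|z)} \le C a^{-(y|z)}$.

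The lower bound is the main obstacle. I would follow Gromov's chain argument: any rectifiable path from $y$ to $z$ is refined to a chain of nearby vertices $y=q_0,q_1,\ldots,q_n=z$, and its $a$-length is bounded below by a sum comparable to $\sum_i a^{-(q_i|q_{i+1})_x}$. One then exploits the ultrametric-type inequality $(y|z) \ge \min((y|w),(w|z)) - \delta$, iterated along the chain, to conclude that the sum cannot fall below $c\,a^{-(y|z)}$. The threshold $a_0>1$ appears precisely here: for $a$ sufficiently close to $1$, the accumulated $O(\delta)$ additive slop arising at each iteration does not swamp the exponential decay, so the bound closes uniformly in the chain length. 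None of this depends on the group-theoretic content of the hyperbolic group case; it is purely a statement about proper $\delta$-hyperbolic geodesic spaces, and so carries over to our setting with no further work.
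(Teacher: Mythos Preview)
The paper does not actually prove this lemma; it is stated as a result due to Gromov and no proof is given. Your proposal is a correct outline of the standard argument (as in Gromov's original treatment and the exposition in \cite{Calegari_ergodic}), and your observation that only properness and $\delta$-hyperbolicity are needed is exactly the point. So there is nothing to compare against: you have supplied a proof where the paper supplies only a citation.
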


The Gromov product $(y|z)$ is usually taken to denote the expression
$$(y|z):=\frac 1 2 \left(d_X(x,y) + d_X(x,z) - d_X(y,z)\right)$$
but since we are only ever interested in the value of this expression up to a
(uniformly bounded) additive constant, we could just as easily use the normalization
$(y|z) = d_X(x,yz)$, i.e.\/ the distance from $x$ to some (equivalently, any) geodesic
$yz$ from $y$ to $z$. We stress that this expression is to be interpreted as 
denoting ``equality up to a uniform additive constant''; this (unspecified but effective)
constant will later be absorbed into a multiplicative constant.

\subsection{Patterson--Sullivan measure}

The next step is to construct a (so-called) Patterson--Sullivan (probability) 
measure on $\overline{X}$. Theorem~\ref{theorem:rationality} has the key corollary
that this measure will be supported on the {\em boundary}.

Define the Poincar\'e {\em zeta function} $\zeta_X(s)$, well-defined for $s$ sufficiently
large, by the formula
$$\zeta_X(s):= \sum_{y \in X} e^{-sd_X(x,y)}$$
Recall that we have already shown that there is an estimate of the form
$$C^{-1} \lambda^n n^k \le |X_n| \le C \lambda^n n^k$$
It follows that $\zeta_X$ converges if $s>h:=\log(\lambda)$ and {\em diverges} at $h$.
We may therefore define, for each $s>h$, a probability measure $\nu_s$ on $\overline{X}$
(supported in $X$) by putting an atom of size $e^{-sd_X(x,y)}/\zeta_X(s)$ at each
$y \in X$. Take a subsequence of measures that converges as $s \to h$ from above,
and define $\nu$ to be the limit. By construction, this is a probability measure
{\em supported on $\partial_\infty X$}.

\subsection{Quasiconformal measure}

Recall that if $y \in \partial_\infty X$, a {\em horofunction} $b_y$ centered at
$y$ is a limit of a convergent
subsequence of functions of the form $d_X(y_i,\cdot) - d_X(y_i,x)$
for $y_i \to y$. Such a horofunction is not unique, but is well-defined up to
a uniformly bounded additive constant.

\begin{definition}[Coornaert]
For $\phi \in \Aut(X)$ define $j_\phi:\partial_\infty X \to \R$ by
$$j_\phi(y) = a^{b_y(x) - b_y(\phi(x))}$$
for some horofunction $b_y$ centered at $y$.
A probability measure $\nu$ on $\partial_\infty X$ is {\em quasiconformal of dimension
$D$} if for every $\phi \in \Aut(X)$, the measure $\phi_*\nu$ is absolutely continuous
with respect to $\nu$, and there is a constant $C$ (independent of $\phi$) so that
$$C^{-1} j_\phi(y)^D \le d(\phi_*\nu)/d\nu \le C j_\phi(y)^D$$
\end{definition}
Note that the uniform additive ambiguity in the definition of $b_y$ is absorbed into
a uniform multiplicative ambiguity in the definition of $j_\phi$, which is then
absorbed into the constant $C$; so this definition makes sense.

\begin{proposition}\label{proposition:quasiconformal}
The measure $\nu$ is quasiconformal of dimension $D$, where $D=h/\log{a}$.
\end{proposition}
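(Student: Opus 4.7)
The plan is to compute the Radon--Nikodym derivative $d(\phi_*\nu_s)/d\nu_s$ explicitly on the atomic approximations $\nu_s$, identify its pointwise limit with $j_\phi^D$ up to a uniform multiplicative error depending only on $\delta$, and then pass to the weak* limit along the subsequence defining $\nu$.

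Since $\phi$ is an isometry of $X$ permuting the atoms of $\nu_s$, the pushforward $\phi_*\nu_s$ places mass $e^{-sd_X(\phi(x),z)}/\zeta_X(s)$ at each $z\in X$, so
$$\frac{d(\phi_*\nu_s)}{d\nu_s}(z) \;=\; e^{s(d_X(x,z)-d_X(\phi(x),z))}.$$
As $z$ approaches a point $\xi\in\partial_\infty X$, the quantity $d_X(x,z)-d_X(\phi(x),z)$ converges, by the defining limit of a horofunction, to $b_\xi(x)-b_\xi(\phi(x))$; the convergence is uniform up to a $\delta$-dependent additive error coming from the Gromov thin-triangle estimate, which at the same time absorbs the additive ambiguity inherent in the definition of $b_\xi$. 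Combined with the relation $h=D\log a$, this shows that for $z$ in a sufficiently small neighborhood $U$ of $\xi$ in $\overline{X}$, the Radon--Nikodym derivative above agrees with $j_\phi(\xi)^D$ up to a fixed multiplicative factor of the form $a^{O(\delta)}$.

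To conclude, I would run a weak* limit argument along the subsequence $s_i\to h$ defining $\nu$. Testing $\phi_*\nu_{s_i}$ against a continuous function $f$ supported in such a neighborhood $U$, the pinching above shows that $\int f\,d(\phi_*\nu_{s_i})$ lies within a factor $a^{O(\delta)}$ of $j_\phi(\xi)^D\int f\,d\nu_{s_i}$; letting $i\to\infty$ and then invoking a partition of unity over the compact space $\overline{X}$, together with the continuity of $\xi\mapsto j_\phi(\xi)$, upgrades this to the desired $\nu$-a.e.\ inequality $C^{-1}j_\phi(y)^D\le d(\phi_*\nu)/d\nu(y)\le C j_\phi(y)^D$ with $C$ depending only on $\delta$. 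The main obstacle is the uniformity of $C$ in $\phi$: the naive pointwise convergence $d_X(x,z)-d_X(\phi(x),z)\to b_\xi(x)-b_\xi(\phi(x))$ a priori depends on $\phi$ through the moving point $\phi(x)$, so one must appeal to the thin-triangle comparison, whose additive error depends only on $\delta$, in order to prevent $C$ from blowing up with $d_X(x,\phi(x))$.
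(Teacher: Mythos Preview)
Your argument is correct and follows essentially the same route as the paper: compute the Radon--Nikodym derivative of $\phi_*\nu_s$ with respect to $\nu_s$ as $e^{s(d_X(x,z)-d_X(\phi(x),z))}$, compare this on a small neighborhood of a boundary point with $j_\phi(\xi)^D$ via the horofunction/thin-triangle estimate (uniform error $O(\delta)$, independent of $\phi$), and then let $s\to h$. The paper phrases the same computation using $d_X(x,\phi^{-1}z)$ rather than $d_X(\phi(x),z)$ and passes to the limit more tersely; your weak* argument with test functions and a partition of unity simply makes explicit what the paper compresses into ``taking $s\to h$''.
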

\begin{proof}
>From the definition of Radon-Nikodym derivative, it suffices to show that there is
a constant $C$, so that for all
$y \in \partial_\infty X$ there is a neighborhood $V$ of $y$ in $\overline{X}$
so that for all $A \subset V$,
$$C^{-1}j_\phi(y)^D \nu(A) \le \nu(\phi^{-1}A) \le Cj_\phi(y)^D\nu(A)$$
By the definition of a horofunction, and $\delta$-thinness, there is a neighborhood
$V$ of $y$ in $\overline{X}$ so that
$$d_X(x,\phi^{-1}z) - d_X(x,z) - C \le b_y(\phi(x)) - b_y(x) \le d_X(x,\phi^{-1}z) - d_X(x,z) + C$$
for some $C$, and for all $z \in V$.

For each $s>h$ we have 
$$\phi_*\nu_s(z)/\nu_s(z) = \nu_s(\phi^{-1}z)/\nu_s(z) = e^{-s(d_X(x,\phi^{-1}z) - d_X(x,z))}$$
Taking $s \to h$ and defining $a^D = e^h$ proves the proposition.
\end{proof}

\subsection{Shadows}

We now recall Sullivan's definition of {\em shadows}:
\begin{definition}
For $y \in X$ and $R>0$ the {\em shadow} $S(y,R)$ is the set of $z \in \partial_\infty X$
such that every geodesic ray from $x$ to $z$ comes within distance $R$ of $y$.
\end{definition}

\begin{lemma}\label{lemma:uniform_cover}
Fix $R>2\delta$. Then there is a constant $N$ so that for any $z\in\partial_\infty X$
and any $n$ there is at least $1$ and there are at most $N$ elements $y$ with
$d_X(x,y)=n$ and $z\in S(y,R)$.
\end{lemma}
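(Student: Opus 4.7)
The plan is to handle the lower and upper bounds separately, both using standard $\delta$-hyperbolicity estimates together with the bounded valence of $X$.

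For the lower bound, I would pick any geodesic ray $\gamma$ from $x$ to $z$ (which exists because $X$ is proper, by Arzel\`a--Ascoli applied to a sequence of geodesic segments from $x$ to points on a sequence approaching $z$) and set $y=\gamma(n)$. Then $d_X(x,y)=n$, and I must check that $z \in S(y,R)$, i.e.\/ that \emph{every} geodesic ray from $x$ to $z$ passes within $R$ of $y$. This is the standard ``asymptotic geodesics fellow travel'' fact in $\delta$-hyperbolic spaces: any two geodesic rays emanating from the same vertex $x$ and converging to the same boundary point $z$ remain within Hausdorff distance $2\delta$ of each other (a two-line argument using the thin triangles property applied to arbitrarily long approximating segments). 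Since $R>2\delta$, any competing ray comes within $R$ of $y=\gamma(n)$, giving $z\in S(y,R)$.

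For the upper bound, fix once and for all a geodesic ray $\gamma$ from $x$ to $z$. If $y$ is any vertex with $d_X(x,y)=n$ and $z\in S(y,R)$, then by definition of the shadow there is some $t\ge 0$ with $d_X(y,\gamma(t))\le R$. The triangle inequality forces $|t-n|\le R$, whence
$$d_X(y,\gamma(n)) \le d_X(y,\gamma(t)) + d_X(\gamma(t),\gamma(n)) \le R + R = 2R.$$
Thus every such $y$ lies in the ball of radius $2R$ about $\gamma(n)$. Since $X$ has finite valence, this ball contains at most $N$ vertices for some $N=N(R,\delta,X)$ independent of $n$ and $z$, finishing the proof.

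The only step that takes any real care is the asymptotic fellow-traveling property used for existence; everything else is a direct consequence of the triangle inequality and bounded valence. I expect no substantive obstacle.
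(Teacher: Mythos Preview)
Your proof is correct and follows essentially the same approach as the paper: for existence take $y=\gamma(n)$ on a chosen ray and use that asymptotic geodesic rays from $x$ to $z$ stay within $2\delta<R$ of each other; for the upper bound show that all admissible $y$ lie in a ball of bounded radius about $\gamma(n)$ and invoke bounded valence. The paper's version is terser (it asserts directly that any two admissible $y,y'$ satisfy $d_X(y,y')\le 2R$ and leaves the bounded-valence step implicit), but the content is the same.
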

\begin{proof}
If $\gamma$ is any geodesic from $x$ to $z$, and if $y$ is any point on $\gamma$, then
$z \in S(y,R)$. Conversely, if $y$ and $y'$ are two elements with $d_X(x,y)=d_X(x,y')$
and $z \in S(y,R)\cap S(y',R)$ then $d_X(y,y')\le 2R$.
\end{proof}

\begin{lemma}\label{lemma:uniform_size}
Fix $R$. Then there is a constant $C$ so that for any $y \in X$ there is an inequality
$$C^{-1} a^{-d_X(x,y)D} \le \nu(S(y,R)) \le Ca^{-d_X(x,y)D}$$
\end{lemma}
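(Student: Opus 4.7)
The plan is to follow the classical Sullivan shadow lemma: the upper bound comes directly from the quasiconformality of $\nu$ in Proposition \ref{proposition:quasiconformal}, while the lower bound is obtained by bounding the approximating measures $\nu_s$ from below and passing to the weak limit $s \to h^+$.

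For the upper bound, using vertex transitivity of $\Aut(X)$ I would pick $\phi \in \Aut(X)$ with $\phi(x) = y$. For every $z \in S(y,R)$ some geodesic from $x$ to $z$ passes within $R$ of $y$, and the standard Gromov-product computation along this geodesic yields $b_z(x) - b_z(y) = d_X(x, y) + O(R + \delta)$, so $j_\phi(z) \asymp a^{d_X(x, y)}$ uniformly on $S(y, R)$. Feeding this into the quasiconformal relation gives
$$\nu(\phi^{-1} S(y, R)) \asymp a^{D d_X(x, y)} \, \nu(S(y, R)),$$
and since $\nu$ is a probability measure, the inequality $\nu(\phi^{-1} S(y, R)) \le 1$ forces $\nu(S(y, R)) \le C a^{-D d_X(x, y)}$.

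For the lower bound I would work at the $\nu_s$ level. Let $T(y, R) \subseteq X$ be the thickened shadow of vertices $z$ whose geodesic from $x$ passes within $R$ of $y$; by $\delta$-thinness every such $z$ satisfies $d_X(x, z) = d_X(x, y) + d_X(y, z) + O(\delta)$, and trivially $\cone(y) \subseteq T(y, R)$. Hence
$$\nu_s(T(y, R)) \gtrsim \zeta_X(s)^{-1} \, e^{-sd_X(x, y)} \sum_{z \in \cone(y)} e^{-s d_X(y, z)}.$$
The closure of $T(y, R)$ in $\overline X$ meets $\partial_\infty X$ inside $S(y, R + O(\delta))$ (any boundary limit of vertices in $T(y, R)$ is approached along a geodesic within $R$ of $y$, which $2\delta$-fellow-travels every other geodesic to the same limit), so Portmanteau transfers the above inequality to $\nu$ after replacing $R$ by $R + O(\delta)$. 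Using vertex transitivity to write $\zeta_X(s) = \sum_{w \in X} e^{-s d_X(y, w)}$, the lower bound reduces to showing that the cone fraction $\sum_{z \in \cone(y)} e^{-s d_X(y, z)} / \zeta_X(s)$ stays bounded below uniformly in $y$ as $s \to h^+$.

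This cone comparison is the main obstacle. Via the finite tournament-type automaton from the proof of Theorem \ref{theorem:rationality}, the cone sum is the generating function of forward paths in the non-negative transition matrix $M$ starting from the state of $y$; since $M$ has spectral radius $\lambda = e^h$, a Perron--Frobenius analysis of its irreducible components should yield a strictly positive $s \to h^+$ limit for every state from which the dominant irreducible component is accessible. The subtler case is that of ``dead-end'' states whose cones grow sub-exponentially, but by $\delta$-hyperbolicity each such $y$ lies within uniformly bounded distance of a vertex $y^*$ with infinite cone, and the inclusion $S(y^*, R) \subseteq S(y, R + O(1))$ bootstraps the lower bound at $y$ from that at $y^*$ after one more bounded enlargement of $R$.
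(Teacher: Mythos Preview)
Your upper bound is exactly the paper's: pull back by $\phi\in\Aut(X)$ with $\phi(x)=y$, control $j_\phi$ on $S(y,R)$ via the horofunction estimate, and bound $\nu(\phi^{-1}S(y,R))\le 1$.

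For the lower bound the paper takes a much shorter route that never leaves the limit measure. With the same $\phi$, the set $\phi^{-1}S(\phi(x),R)$ consists of those $z\in\partial_\infty X$ such that every geodesic from $\phi^{-1}(x)$ to $z$ meets $B_R(x)$; as $R\to\infty$ the complement of this set has $a$-diameter tending to zero \emph{uniformly in $\phi$}. Quasiconformality forbids $\nu$ from being a single atom, so by compactness there exist $m<1$ and $R_0$ with $\nu(\phi^{-1}S(\phi(x),R))\ge 1-m$ for all $R\ge R_0$ and all $\phi$. Feeding this uniform lower bound into the same quasiconformal comparison you used for the upper bound gives $\nu(S(\phi(x),R))\ge C^{-1}a^{-Dd_X(x,\phi(x))}$ immediately. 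No return to the $\nu_s$, no automaton, no cones.

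Your route through the approximating measures and the cone comparison has genuine gaps. First, the cone sum $\sum_{z\in\cone(y)}e^{-sd(y,z)}$ is not the generating function of forward paths from the state of $y$ under the matrix $M$ of Theorem~\ref{theorem:rationality}: the entries of $M$ carry the parent-reciprocal weights tuned so that $\iota M^n\1=|X_{=n}|$, and for a non-base state these weights count parents relative to $x$, some of which lie outside $\cone(y)$, so $e_i M^k\1$ undercounts $|\cone(y)\cap\{d(y,\cdot)=k\}|$ rather than equalling it. You would need a different automaton to run Perron--Frobenius on. Second, your dead-end bootstrap is incomplete: locating a nearby $y^*$ with merely \emph{infinite} cone does not suffice, since you need $|\cone(y^*)\cap\{d(y^*,\cdot)=k\}|\ge c\lambda^k$ uniformly, and nothing you have written establishes that every vertex is within bounded distance of one whose cone grows at the top rate. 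This is plausible in a hyperbolike graph, but proving it is comparable in difficulty to the lemma itself; the paper's argument sidesteps the issue entirely.
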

\begin{proof}
First observe by $\delta$-thinness and the definition of a shadow, that there is
some constant $C'$ so that
$$d_X(x,y) - C' \le b_z(x) - b_z(y) \le d_X(x,y) + C'$$
for any $z \in S(y,R)$. Since $j_\phi(z) = a^{b_z(x) - b_z(\phi(x))}$ it follows
that there is a constant $C$ so that
$$C^{-1}a^{d_X(x,\phi(x))} \le j_\phi(z) \le Ca^{d_X(x,\phi(x))}$$
for any $\phi \in \Aut(X)$ and any $z \in S(\phi(x),R)$.

Now, since $\nu$ is a quasiconformal measure, $\nu$ cannot consist of a single
atom. So let $m_0<1$ be the measure of the biggest atom of $\nu$, and fix $m_0 < m < 1$.
By compactness of $\partial_\infty X$ there is some $\epsilon$ so that every ball
in $\partial_\infty X$ of diameter $\le \epsilon$ (in the $a$-metric) has mass at most $m$.
Now, for any $\phi \in \Aut(X)$, the set $\phi^{-1}S(\phi(x),R)$ consists of exactly
the $y \in \partial_\infty X$ for which every geodesic ray from $\phi^{-1}(x)$ to $y$
comes within distance $R$ of $x$. As $R \to \infty$, the diameter of
$\partial_\infty X - \phi^{-1}S(\phi(x),R)$ goes to zero uniformly in $\phi$, and
so for some $R_0$, and for all $R\ge R_0$, we have 
$$1-m \le \nu(\phi^{-1} S(\phi(x),R)) \le 1$$
independent of $\phi$.

But by Proposition~\ref{proposition:quasiconformal} and the discussion above,
there is some constant $C_1$ so that
$$C_1a^{d_X(x,\phi(x))D} \le \nu(\phi^{-1}S(\phi(x),R))/\nu(S(\phi(x),R)) \le C_1a^{d_X(x,\phi(x))D}$$
Taking reciprocals, and using $1-m \le \nu(\phi^{-1} S(\phi(x),R)) \le 1$ completes the proof.
\end{proof}

We now give the proof of Theorem~\ref{theorem:exponential}.
\begin{proof}
We already know the lower bound. For each $y$ with $d(x,y)=n$ we have
$e^{-hn} = a^{-Dn} \le C\nu(S(y,R))$. On the other hand, by Lemma~\ref{lemma:uniform_cover},
every point $z \in \partial_\infty X$ is contained in at least 1 and at most $N$ sets
$S(y,R)$ with $d(x,y)=n$. So
$$|X_{=n}|e^{-hn}C^{-1} \le \sum_{d(x,y)=n} \nu(S(y,R)) \le N\nu\Bigl(\bigcup_{d(x,y)=n} S(y,R)\Bigr) = N$$
\end{proof}

\section{The opposite series $\Omega(P)$ for a power series $P$}

\subsection{Definitions}\label{subsection:definitions}

We recall the definition of {\em opposite series} from Saito \cite{Sa}.  
Let $P(t)=\sum_{n=0}^\infty a_nt^n$ be a power series in $t$
with $a_n$ real numbers. Assume there exist $u, v$ such that
for all $n$, $u \le a_{n-1}/a_n \le v$.

Define a polynomial in $s$ for each
$n \ge 0$ as follows: 
$$X_n(P) = \sum_{k=0}^{n} \frac{a_{n-k}}{a_n} s^k.$$
Define $\Omega(P)$ as the set of accumulation points
of the sequence $\{X_n\}_n$ in the set of formal power series on $s$
(with respect to the product topology on each coefficients).
An element in $\Omega(P)$ is called an {\em opposite series}
in \cite[\S 11.2]{Sa}.

Now, let $G$ be a group with a finite generating set $S$.
Let $a_n$ denote the number of elements $g \in G$
whose word length is $n$ with respect to $S$.
Using $a_n$'s, we define $P(t)$, denoted   
by $P_{G,S}$, and obtain $\Omega(P_{G,S})$.

Saito also defined another set $\Omega(G,S)$, 
a map $\pi_{\Omega}:\Omega(G,S) \to \Omega(P_{G,S})$
and proved (Theorem in \S11.2) that the map is surjective under two
assumptions ({\bf S} and {\bf I} in his paper; we will discuss {\bf S}). 
Saito's theory is most interesting when $\Omega(G,S)$ or $\Omega(P_{G,S})$
is finite, but his paper gives only a few examples where finiteness
is shown to hold.

Saito proposed the following conjecture in the last section of his paper 
\cite[\S 12. Conjecture 4]{Sa}:
\begin{conjecture}[Saito] \label{question:saito}
$\Omega(G,S)$ is finite if $G$ is a word hyperbolic group.
\end{conjecture}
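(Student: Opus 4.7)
The natural route is to prove the analytic finiteness of $\Omega(P_{G,S})$ using Theorems \ref{theorem:rationality} and \ref{theorem:exponential}, then lift through the surjection $\pi_\Omega$. Let $X$ be the Cayley graph of $G$ with respect to $S$; it is $\delta$-hyperbolike for some $\delta$. By Theorem \ref{theorem:rationality} and the relation $a_n = |X_{=n}| = |X_n| - |X_{n-1}|$, the series $P_{G,S}(t)=\sum a_nt^n$ is rational. Standard linear-recurrence theory then gives $a_n = \sum_j c_j(n)\lambda_j^n$, where the $\lambda_j$ are the reciprocal poles and the $c_j$ are polynomials in $n$. Theorem \ref{theorem:exponential} (in the $X_{=n}$ form given in the remark following it) then pins the dominant root $\lambda=\max_j|\lambda_j|$ down to a simple, positive real pole with a constant multiplier $c>0$, and forces any other $\lambda_j$ on the circle $|\lambda_j|=\lambda$ to also have a constant (not polynomial) coefficient $c_j$.

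The heart of the argument is to apply the Perron--Frobenius theorem for non-negative matrices to the transition matrix $M$ built in the proof of Theorem \ref{theorem:rationality}. Decomposing $M$ into irreducible blocks, those blocks achieving spectral radius $\lambda$ contribute peripheral eigenvalues of the form $\lambda\omega$ where $\omega$ is a root of unity whose order divides the block's imprimitivity index $h_i$; setting $H=\mathrm{lcm}\{h_i\}$, every peripheral eigenvalue $\lambda_j$ then satisfies $(\lambda_j/\lambda)^H=1$, and all other $\lambda_j$ contribute only geometrically decaying corrections. Consequently, for each fixed $k$ the ratio
$$\frac{a_{n-k}}{a_n}\,\lambda^k \;=\; \frac{c+\sum_{|\lambda_j|=\lambda,\,\lambda_j\ne\lambda} c_j(\lambda_j/\lambda)^{n-k}+o(1)}{c+\sum_{|\lambda_j|=\lambda,\,\lambda_j\ne\lambda} c_j(\lambda_j/\lambda)^{n}+o(1)}$$
is eventually periodic in $n$ modulo $H$. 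Thus along each residue class $n\equiv r\pmod H$ the polynomials $X_n(P_{G,S})$ converge coefficient-wise to a single formal power series, and $|\Omega(P_{G,S})|\le H$.

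Finally, to transfer finiteness from $\Omega(P_{G,S})$ to $\Omega(G,S)$, one would apply Theorem \ref{theorem:rationality} to finitely many slightly larger finite subgraphs $Y$ (for instance, balls of radius $1,2,\ldots$ about the identity), obtaining finitely many rational generating series whose joint combinatorial content pins down each element of $\Omega(G,S)$ up to finite ambiguity; a parallel Perron--Frobenius analysis applied to each of these series shows that every fiber of $\pi_\Omega$ is itself finite, completing the conjecture. The main obstacle lies in the Perron--Frobenius step: verifying that the non-negative matrix $M$ (restricted to the connected component of the base vertex) decomposes cleanly and, more delicately, that imprimitive blocks whose spectral radius equals $\lambda$ truly have peripheral spectrum consisting of $\lambda$ times roots of unity rather than arbitrary unit-modulus algebraic numbers --- here one uses the rational integrality of the entries of $M$ together with Kronecker's theorem on algebraic integers on the unit circle. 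Step 5 (the $\pi_\Omega$-fiber analysis) is, in comparison, a bookkeeping exercise once the analytic finiteness is in hand.
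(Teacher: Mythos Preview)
The statement you are attempting is \emph{Conjecture}~\ref{question:saito}, which the paper explicitly declares to be \emph{open}: ``Conjecture \ref{question:saito} is still open.'' The paper proves only the weaker Corollary~\ref{hyp}, that $\Omega(P_{G,S})$ is finite, and your argument for that part is essentially the same as the paper's (rationality, Perron--Frobenius on $M$, simplicity of the dominant poles via Theorem~\ref{theorem:exponential}, then periodicity of the coefficient ratios). So the first four paragraphs of your plan reproduce the paper's proof of Corollary~\ref{hyp}, not of the conjecture.

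The genuine gap is your final step. The map $\pi_\Omega:\Omega(G,S)\to\Omega(P_{G,S})$ is a \emph{surjection}, so finiteness of the target says nothing about the source; what you need is finiteness of the fibers, and you dismiss this as ``a bookkeeping exercise.'' It is not. The object $\Omega(G,S)$ is defined in \cite{Sa} in terms of limits of configurations in the Cayley graph, not merely in terms of counting functions, and there is no evident mechanism by which rationality of finitely many series $b_Y(t)$ pins down an element of $\Omega(G,S)$ up to finite ambiguity. Moreover, Saito's own theorem that $\pi_\Omega$ is surjective requires hypotheses \textbf{S} and \textbf{I}, and the paper exhibits a hyperbolic group (Example~\ref{example:triangle}) for which \textbf{S} fails; so even the framework relating the two $\Omega$'s is not available in general. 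Your proposal contains no idea for bridging this gap.

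A minor technical point: you invoke ``rational integrality of the entries of $M$ together with Kronecker's theorem'' to conclude that the peripheral eigenvalues are $\lambda$ times roots of unity. The entries of $M$ are rational but not integral (they are reciprocals of parent-counts), so Kronecker does not apply. Fortunately it is not needed: Perron--Frobenius for an irreducible non-negative matrix already asserts that the peripheral spectrum is $\lbrace \lambda e^{2\pi i k/h}:0\le k<h\rbrace$ for the period $h$, and this passes to reducible $M$ block by block.
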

In view of Saito's theorem relating $\Omega(G,S)$ to $\Omega(P_{G,S})$, it is natural to ask:
\begin{question}
Is $\Omega(P_{G,S})$  finite  if $G$ is hyperbolic ?
\end{question}

Saito conjectures
that this will be the case \cite{Sa2}, and 
we  will answer this question 
in the affirmative (Corollary \ref{hyp}).
Conjecture \ref{question:saito} is still open. 
Interestingly it turns out that there is an example of a hyperbolic group
which does not satisfy the assumption {\bf S} (see Example
\ref{example:triangle}).

\begin{theorem}[finiteness]\label{main}
Let $X$ be a hyperbolike graph, and for any connected graph $Y$, let $b_Y(t)$
be the generating function whose coefficient of $t^n$ is the number of
distinct embeddings of $Y$ as a complete subgraph of $X_n$. Then $\Omega(b_Y)$
is finite.
\end{theorem}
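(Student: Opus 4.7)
The plan is to combine the Rationality Theorem (linear recurrence on $a_n := (Y \mid X_n)$) and the Exponential Theorem ($|X_n|$ grows as $\lambda^n$ without polynomial factor) to control the peripheral asymptotics of $a_n$, and then apply Perron--Frobenius to the transition matrix $M$ from the proof of Theorem \ref{theorem:rationality}.

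First I verify that $a_n \asymp \lambda^n$. Pick a base vertex $y$ in $Y$; let $D$ be the diameter of $Y$. Every embedding of $Y$ in $X$ sending $y$ to a vertex $v$ has image in the ball $B_D(v)$, and by vertex-transitivity of $\Aut(X)$ the number of such embeddings is a constant $c_Y$ independent of $v$. Thus $c_Y |X_{n-D}| \le a_n \le c_Y |X_n|$, so by Theorem \ref{theorem:exponential} there exist $A_1, A_2 > 0$ with $A_1 \lambda^n \le a_n \le A_2 \lambda^n$ for all $n \ge D$. By Theorem \ref{theorem:rationality}, $b_Y$ is rational, and by the construction in its proof every pole of $b_Y$ is at a reciprocal eigenvalue of the nonnegative weight matrix $M$; partial fractions then give $a_n = \sum_i p_i(n) \rho_i^n$ with $\rho_i$ eigenvalues of $M$ and $p_i$ polynomial.

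The key step is to show that the peripheral part of $a_n$ is $N$-periodic for some $N$. First, the two-sided bound $a_n \asymp \lambda^n$ forces each $p_i$ with $|\rho_i| = \lambda$ to be constant: letting $d = \max\{\deg p_i : |\rho_i|=\lambda\}$, the sequence $a_n/(\lambda^n n^d)$ converges to the almost-periodic sequence $g(n) = \sum_i a_i (\rho_i/\lambda)^n$ (with $a_i$ the leading coefficient of $p_i$, zero for lower-degree terms), and by Parseval
$$\lim_{N\to\infty} \frac{1}{N}\sum_{n < N} |g(n)|^2 = \sum_i |a_i|^2 > 0,$$
so if $d \ge 1$ we would have $a_n$ exceeding $C\lambda^n n^d$ on a set of positive density, contradicting $a_n = O(\lambda^n)$. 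Hence $a_n = \lambda^n f(n) + O(\mu^n)$ for some $\mu < \lambda$, where $f(n) := \sum_{|\rho_i|=\lambda} c_i (\rho_i/\lambda)^n$. Next, because $M$ has nonnegative entries, the Perron--Frobenius theorem (applied inside each strongly connected component of the support digraph of $M$ whose spectral radius equals $\lambda$) forces every peripheral eigenvalue of $M$ to be of the form $\lambda \omega$ with $\omega$ a root of unity. Let $N$ be a common multiple of the orders of these roots of unity; then $f(n+N)=f(n)$, and the lower bound $a_n \ge A_1 \lambda^n$ forces $f(n) \ge A_1/2 > 0$ on each residue class mod $N$ for $n$ large.

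With these asymptotics, finiteness of $\Omega(b_Y)$ is immediate: for each $k$,
$$\frac{a_{n-k}}{a_n} = \lambda^{-k} \frac{f(n-k)}{f(n)} + O((\mu/\lambda)^n),$$
which depends on $n$ only through $n \bmod N$, up to an error tending to zero with $n$. Hence $X_n(b_Y)$ converges coefficientwise along each residue class mod $N$, so $\Omega(b_Y)$ has at most $N$ elements. The main obstacle is the Perron--Frobenius step: $M$ need not be irreducible, so one must decompose via strongly connected components, identify the components realizing the maximum spectral radius $\lambda$, and invoke the classical structure of the peripheral spectrum of each irreducible block. The almost-periodicity step is a secondary but crucial technical ingredient; it is the place where the pure $\lambda^n$ growth from Theorem \ref{theorem:exponential} (rather than the weaker $\lambda^n n^k$ growth following formally from rationality alone) is genuinely used.
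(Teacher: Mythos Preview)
Your proof is correct and follows essentially the same approach as the paper's: rationality plus Perron--Frobenius plus the exponential bound yield $a_n = C_{[n]}\lambda^n + o(\lambda^n)$ with $C_{[n]}>0$ periodic in $n \bmod N$, whence $\Omega(b_Y)$ has at most $N$ elements. You are in fact more careful than the paper on two points it glosses over: you explicitly transfer the two-sided estimate from $|X_n|$ to $a_n$ via the sandwich $c_Y|X_{n-D}|\le a_n\le c_Y|X_n|$ (the paper invokes Theorem~\ref{theorem:exponential}, which is literally stated only for $|X_n|$), and your Parseval argument together with the remark on decomposing $M$ into strongly connected components makes precise why the peripheral contribution cannot cancel and why the peripheral ratios $\rho_i/\lambda$ are genuine roots of unity.
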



A special case, which answers Saito's question, is:

\begin{corollary}\label{hyp}
If $G$ is a word hyperbolic group, then 
$\Omega(P_{G,S})$ is finite for any finite generating set $S$.
\end{corollary}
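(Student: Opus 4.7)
The plan is to deduce Corollary~\ref{hyp} directly from Theorem~\ref{main} applied to the Cayley graph of $G$, with a small adjustment to bridge the gap between the ball-counting series $b_x(t)$ (to which Theorem~\ref{main} applies with $Y$ a single vertex) and the sphere-counting series $P_{G,S}(t)$.

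First I would observe that $X := \mathrm{Cay}(G,S)$ is $\delta$-hyperbolike for the relevant $\delta$: it is $\delta$-hyperbolic by hypothesis on $G$, and $G$ itself, sitting inside $\Aut(X)$, already acts transitively on the vertices by left multiplication. Theorem~\ref{main} with $Y$ the one-vertex graph then yields that $\Omega(b_x)$ is a finite set of formal power series, where $b_x(t) = \sum_n B_n t^n$ and $B_n := |X_n|$.

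The main step is to relate $\Omega(P_{G,S})$ to $\Omega(b_x)$. Writing $a_n$ for the number of elements of $G$ of length exactly $n$, so $a_n = B_n - B_{n-1}$, one has $P_{G,S}(t) = (1-t)\,b_x(t)$. Given any $H(s) \in \Omega(P_{G,S})$, realized as a limit $X_{n_j}(P_{G,S}) \to H(s)$, the ratios $B_{n_j-k}/B_{n_j}$ all lie in $[0,1]$, so by a diagonal extraction we may pass to a further subsequence along which $X_{n_j}(b_x) \to F(s) = \sum_k f_k s^k \in \Omega(b_x)$ as well. Along this subsequence the coefficient of $s^k$ in $X_n(P_{G,S})$, namely
$$\frac{a_{n-k}}{a_n} = \frac{B_{n-k} - B_{n-k-1}}{B_n - B_{n-1}} = \frac{B_{n-k}/B_n - B_{n-k-1}/B_n}{1 - B_{n-1}/B_n},$$
converges to $(f_k - f_{k+1})/(1 - f_1)$. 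The denominator is bounded away from $0$ independently of the chosen subsequence: Theorem~\ref{theorem:exponential} together with its remark gives both $B_n$ and $a_n$ comparable to $\lambda^n$ with the same $\lambda > 1$, so $1 - B_{n-1}/B_n = a_n/B_n$ is uniformly bounded below by a positive constant.

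Consequently $H(s) = \sum_k \frac{f_k - f_{k+1}}{1 - f_1}\,s^k$ is entirely determined by $F(s)$, and the resulting map $\Omega(b_x) \to \Omega(P_{G,S})$ is surjective. Since $\Omega(b_x)$ is finite, so is $\Omega(P_{G,S})$. I do not anticipate any serious obstacle: all the real work is absorbed into Theorem~\ref{main}, and the only point of care is the uniform lower bound on $a_n/B_n$, which is exactly the content of Theorem~\ref{theorem:exponential}.
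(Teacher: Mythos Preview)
Your proof is correct. The paper, however, does not give a separate argument for the corollary: it treats the proof of Theorem~\ref{main} as simultaneously establishing Corollary~\ref{hyp}, the point being that the argument there uses only that the generating series is rational with poles among the reciprocals of the eigenvalues of $M$ and that its coefficients satisfy a two-sided purely exponential bound. Both hold for $P_{G,S}(t) = (1-t)\,b_x(t)$ directly (rationality is immediate, and the growth bound is the Remark following Theorem~\ref{theorem:exponential}), so one gets $a_n = C_{[n]}\lambda^n + o(\lambda^n)$ with $C_{[n]}>0$ periodic in $n$, and finiteness of $\Omega(P_{G,S})$ follows exactly as for $\Omega(b_Y)$.

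Your route is genuinely different: you invoke Theorem~\ref{main} only as a black box for $b_x$, and then transfer finiteness from $\Omega(b_x)$ to $\Omega(P_{G,S})$ via the explicit formula $H(s)=\sum_k \dfrac{f_k-f_{k+1}}{1-f_1}\,s^k$. This has the virtue of not reopening the proof of Theorem~\ref{main}, at the cost of the diagonal-extraction step and the check that $1-f_1$ is bounded away from zero (which, as you note, is exactly the content of Theorem~\ref{theorem:exponential} and its Remark). The paper's approach is shorter because it recognizes that the proof of Theorem~\ref{main} is really a statement about a class of rational series with the right pole structure and growth, of which $P_{G,S}$ is a member, rather than about the series $b_Y$ specifically.
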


%
%
%
%

\subsection{Proof of Theorem \ref{main} and Corollary \ref{hyp}}
We recall a well-known result from analytic combinatorics. 

\begin{theorem}\cite[Th IV.9]{CUP}\label{cup}
If $f(z)$ is a rational function that is analytic at zero and has poles at points
$\alpha_1, \alpha_2, \cdots \alpha_m$, then its coefficients are a sum of
{\em exponential-polynomials}: there exist $m$ polynomials $\Pi_j(x)$ such that, for
$n$ larger than some fixed $n_0$,
$$f_n = \sum_j \Pi_j(n) \alpha_j^{-n}$$
where $f_n$ is the coefficient of $z^n$ in $f(z)$. Furthermore, the degree of $\Pi_j$
is equal to the order of the pole of $f$ at $\alpha_j$ minus one.
\end{theorem}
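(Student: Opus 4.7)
The plan is the classical one for any rational-function asymptotic: partial fraction decomposition over $\C$ followed by term-by-term expansion via the generalized binomial series. No input from the preceding sections of the paper is needed; this is a routine analytic-combinatorics lemma.

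First, using that $f(z)$ is rational, analytic at $0$, and has poles at $\alpha_1, \ldots, \alpha_m$ of respective orders $d_1, \ldots, d_m$, I would write down the partial fraction decomposition
$$f(z) = P(z) + \sum_{j=1}^{m} \sum_{k=1}^{d_j} \frac{c_{j,k}}{(1 - z/\alpha_j)^k},$$
where $P(z)$ is a polynomial and the top residues satisfy $c_{j,d_j} \neq 0$ for each $j$. Existence and uniqueness of this decomposition is standard over the algebraically closed field $\C$. The polynomial part contributes nothing to $f_n$ once $n > \deg P$, so it can be discarded for the asymptotic statement.

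Next I would expand each partial fraction using the generalized binomial theorem
$$\frac{1}{(1 - w)^k} = \sum_{n \ge 0} \binom{n+k-1}{k-1}\, w^n$$
with $w = z/\alpha_j$. Extracting the coefficient of $z^n$ and grouping by $j$ gives, for $n > \deg P$,
$$f_n = \sum_{j=1}^m \Pi_j(n)\, \alpha_j^{-n}, \qquad \Pi_j(n) := \sum_{k=1}^{d_j} c_{j,k} \binom{n+k-1}{k-1}.$$
Since $\binom{n+k-1}{k-1}$ is a polynomial in $n$ of degree exactly $k-1$ with leading coefficient $1/(k-1)!$, and the top-degree contribution to $\Pi_j(n)$ arises solely from $k = d_j$ with nonzero coefficient $c_{j,d_j}$, the polynomial $\Pi_j$ has degree exactly $d_j - 1$, i.e.\ the order of the pole at $\alpha_j$ minus one.

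There is no real obstacle: the proof rests on two elementary ingredients, partial fractions and the binomial series, and the bulk of the work is bookkeeping. The only subtle point is pinning down the \emph{exact} degree of $\Pi_j$, which follows because $c_{j,d_j} \neq 0$ by definition of the pole order and the top-degree binomial contributions across different $k$ cannot cancel.
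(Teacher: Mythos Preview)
Your argument is correct and is exactly the classical partial-fraction proof. Note, however, that the paper does not actually prove this theorem: it simply quotes it as Theorem~IV.9 of Flajolet--Sedgewick \cite{CUP} and then applies it. Your write-up is essentially the proof found in that reference, so there is nothing to compare.
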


Let's apply this to prove Theorem~\ref{main}.
\proof
 The power series $b_Y(t)$ is a rational
function, whose poles are (a subset of) the reciprocals of the roots of the matrix $M$ 
constructed in the proof of Theorem~\ref{theorem:rationality}. Since $M$ is a non-negative
matrix, Perron--Frobenius theory says that there is a root of largest absolute value which
is real and positive, and all other roots with this absolute value differ by multiplication
by a root of unity. From Theorem~\ref{theorem:exponential} and Theorem~\ref{cup} we conclude
that these roots of maximum modulus are {\em simple}, or else the dominant term in the
growth rate of the coefficients of $b_Y(t)$ would be of the form polynomial times exponential,
where the polynomial had positive degree (contrary to Theorem~\ref{theorem:exponential}).

It follows from Theorem~\ref{cup} that for $n$ sufficiently big, there is some $m_0 \le m$
so that after reordering the poles of $b_Y(t)$ in non-decreasing modulus, 
we have an expression of the form
$$f_n = \sum_{j\le m_0} \pi_j \alpha_j^{-n} + \sum_{j>m_0} \Pi_j(n) \alpha_j^{-n}$$
where $\alpha_1$ is real and positive, where $\alpha_j$ for $j\le m_0$ is of the form 
$\alpha_1 \omega_j$ for some root of unity $\omega_j$, and where $|\alpha_j| > \alpha_1$
for $j>m_0$.

Evidently $\alpha_1^{-1} = \lambda$ with notation from Theorem~\ref{theorem:exponential}.
Moreover, if $N$ is the least common multiple of the order of the roots of unity $\omega_j$,
then we can rewrite this expression as
$$f_n = C_{[n]} \lambda^n + o(\lambda^n)$$
where $C_{[n]}$ depends only on the residue of $n$ mod $N$. Again, by Theorem~\ref{theorem:exponential}
we can conclude that $C_{[n]}$ is real and {\em positive} for all $n$ mod $N$.

If we define the polynomial $X_n(b_Y) = \sum_{k=0}^n \frac {f_{n-k}} {f_n} s^k$ as in
Definition~\ref{subsection:definitions}, then as $n \to \infty$ for every fixed $k$ the
coefficient of $s^k$ in $X_n(P)$ approaches a value depending only on $n$ mod $N$. Hence there
are finitely many accumulation points of the $X_n$, which is exactly the conclusion of 
Theorem~\ref{main}.
\qed

\section{Dead ends}\label{section:dead}

\begin{definition}
Let $X$ be a graph and $x$ a base point. A vertex $y$ is a {\em dead end} if there
is no $z \ne y$ with $d(x,z) = d(x,y) + d(y,z)$.
\end{definition}

It is important for Saito to study graphs with the additional hypothesis that
the asymptotic density of dead end elements is zero.
This is one of the assumptions he puts in the main theorem 
in \cite[\S 11.2, Assumption 2. {\bf S}]{Sa}.
 Unfortunately, we show now 
that this hypothesis is genuinely restrictive, since there are (very simple) hyperbolic
groups with finite generating sets whose Cayley graphs have a positive density of
dead ends. Actually, these examples are already well-known; we simply bring them
up to point out the implications for Saito's theory.

The following example is worked out in detail by Pfeiffer \cite{Pf}, Appendix~C; we summarize the
story. 

\begin{example}[Triangle group]\label{example:triangle}
Let $G$ be the $(2,3,7)$ triangle group; i.e.\/ the group with the following presentation
$$G:=\langle a,b \; | \; a^2, b^3, (ab)^7\rangle$$
We abbreviate $b^{-1}$ by $B$.
Every geodesic word in $G$ alternates between $a$ and either $b$ or $B$.

Moreover, {\em infinite} geodesics are exactly those that don't contain 
(except possibly at the very start)
substrings of the form $ababab$ or $aBaBaB$. For, suppose $ababab$ appears in the 
middle of the word. It must be followed by an $a$, and preceded by either $b$ or $B$. 
If we have $babababa$ then of course we can replace it by $aBaBaB$ which is shorter. 
If we have $Babababa$ we can rewrite it as $BBaBaBaB = baBaBaB$ which is shorter.

Now, if $W$ is any word with at most 2 consecutive $ab$s or $aB$s in a row (and is
therefore a geodesic), we can extend it to something like 
$WXBabaBababab$ which now we claim is a dead-end. 
For, it can only be extended to $$WXBabaBabababa = WXBabaBBaBaBaB = WXBababaBaBaB$$
which is definitely shorter. On the other hand, $WXBabaBababab$ is itself a geodesic; 
trying to rewrite it, one can only replace $ababab$ by $BaBaBaBa$ giving 
$$WXBabaBBaBaBaBa = WXBababaBaBaBa$$ which is longer. 

Thus this group has dead end elements with positive density (at least $2^{-6}$).
\end{example}

\end{document}